     \def\thefigure{\thesection.\@arabic\c@figure}
     \def\thetable{\thesection.\@arabic\c@table}
  \theoremstyle{plain}
  \newtheorem{eg}{Example}
\crefname{eg}{Example}{Examples} 
\crefname{code}{line}{lines} 
\crefname{mylineno}{line}{lines} 
\def\eu{\ensuremath{\mathrm{e}}}
\def\iu{\ensuremath{\mathrm{i}}}
\def\mymatrix#1{\null\,\vcenter{\normalbaselines\m@th
   \ialign{\hfil$##$\hfil&&\quad\hfil$##$\hfil\crcr
     \mathstrut\crcr\noalign{\kern-\baselineskip}
     #1\crcr\mathstrut\crcr\noalign{\kern-\baselineskip}}}\,}
\def\mybmatrix#1{\left[\mymatrix{#1}\right]}
\def\mmax{m_{\max}}
\def\pmax{p_{\max}}
\newcommand{\C}{\mathbb{C}}
\newcommand{\R}{\mathbb{R}}
\newcommand{\nbyn}{{n \times n}}
\newcommand{\nbyq}{{n \times q}}
\newcommand{\nbynz}{{n \times n_0}}
\DeclareMathOperator{\diag}{diag}
\def\Re{\mathop{\mathrm{Re}}}
\def\Im{\mathop{\mathrm{Im}}}
\def\At{\widetilde{A}}
\def\a{\alpha}
\def\b{\beta}
\def\th{\theta}
\def\thhat{\widehat{\theta}}
\def\xhat{\widehat{x}}
\def\Alg{Algorithm}
\def\alg{algorithm}
\def\norm#1{\|#1\|}
\def\normF#1{\|#1\|_F}
\def\normi#1{\|#1\|_1}
\def\t#1{\texttt{#1}}
\def\resp{respectively}
\def\py{polynomial}
\def\mmax{m_{\max}}
\def\Cmstar{C_{m_*}}
\def\rom#1{{\upshape#1}}
\def\r{\mathrm{r}}
\def\i{\mathrm{i}}
\def\cn{condition number}
\def\rom#1{{\upshape#1}}
\newcommand{\parens}[1]{\rom{(}#1\rom{)}}
\newcounter{mylineno}
\let\oldtabcr\@tabcr
\def\mynewline{\refstepcounter{mylineno}%
               \llap{\footnotesize\arabic{mylineno}\hspace{5pt}}%
              }
\gdef\@tabcr{\@stopline \@ifstar{\penalty%
           \@M \@xtabcr}\@xtabcr\mynewline}
\newenvironment{code}{%
                        \mathcode`\:="603A  
                        
                        \setcounter{mylineno}{0}
                        \par
                        \upshape
                        \begin{list} 
                           {} {\leftmargin = 1cm}
                        \item[]
                        \begin{tabbing}

                           \hspace*{.3in} \= \hspace*{.3in} \=
                           \hspace*{.3in} \= \hspace*{.3in} \= \kill
                           \mynewline
                       }{\end{tabbing}\end{list}}
\newcommand{\tolh}{u_{\mathrm{half}}}
\newcommand{\tols}{u_{\mathrm{single}}}
\newcommand{\told}{u_{\mathrm{double}}}
\newcommand{\tol}{\mathrm{tol}}
\newcommand{\rme}{\mathrm{e}}
\newcommand{\argmin}{\mathop{\operatorname{arg\,min}}}
\newcommand{\trigmv}{\texttt{trigmv}}
\newcommand{\trighmv}{\texttt{trighmv}}
\newcommand{\expmvt}{\texttt{expmv\_tspan}}
\newcommand{\trigexpmv}{\texttt{trig\_expmv}}
\newcommand{\trighexpmv}{\texttt{trigh\_expmv}}
\newcommand{\expmv}{\texttt{expmv}}
\newcommand{\expmvtspan}{\texttt{expmv\_tspan}}
\newcommand{\cosm}{\texttt{cosm}}
\newcommand{\sinm}{\texttt{sinm}}
\newcommand{\dense}{\texttt{dense}}
\newcommand{\expleja}{\texttt{expleja}}
\newcommand{\trigblock}{\texttt{trig\_block}}
\newcommand{\trighblock}{\texttt{trigh\_block}}
\begin{document}
\title{Computing the Action of Trigonometric and Hyperbolic Matrix Functions%
       \thanks{Version of January 1, 2017.}}
\author{Nicholas J. Higham\thanks{%
  School of Mathematics, The University of Manchester, Manchester, M13 9PL, UK.
       Supported by European Research Council
       Advanced Grant MATFUN (267526)
       and Engineering and Physical Sciences Research Council
       grant EP/I01912X/1.} \and 
  Peter Kandolf\thanks{Institut f\"ur Mathematik, Universit\"at Innsbruck, 
  Austria, Supported by a DOC Fellowship of the Austrian
  Academy of Science at the Department of Mathematics, University of Innsbruck,
  Austria.}}

\newcommand{\peter}[2][]{\todo[color=white, #1]{#2}} 
\newcommand{\nick}[2][]{\todo[color=blue!40!white, #1]{#2}}

\maketitle

\begin{abstract}
We derive a new algorithm for computing the action $f(A)V$ of the cosine,
sine, hyperbolic cosine, and
hyperbolic sine of a matrix $A$ on a matrix $V$,
without first computing $f(A)$.
The algorithm can compute 
$\cos(A)V$ and $\sin(A)V$ simultaneously, and likewise for 
$\cosh(A)V$ and $\sinh(A)V$,
and it uses only real arithmetic when $A$ is real.
The algorithm exploits an existing algorithm \texttt{expmv}
of Al-Mohy and Higham for $\mathrm{e}^AV$ 
and its underlying backward error analysis.
Our experiments show that the new algorithm performs in
a forward stable manner and is generally significantly faster than
alternatives based on multiple invocations of \texttt{expmv}
through formulas such as 
$\cos(A)V = (\mathrm{e}^{\mathrm{i}A}V + \mathrm{e}^{\mathrm{-i}A}V)/2$.
\end{abstract}

\begin{keywords}matrix function, 
action of matrix function, 
trigonometric function,
hyperbolic function,
matrix exponential,
Taylor series, backward error analysis,
exponential integrator,
splitting methods
\end{keywords}

\begin{AMS}65F60, 65D05, 65F30\end{AMS}

\section{Introduction}\label{sec:int}

This work is concerned with the computation of $f(A)V$
for trigonometric and hyperbolic functions $f$,
where $A\in\C^{\nbyn}$ and
      $V\in\C^{\nbynz}$ with $n_0 \ll n$.
Specifically, we consider the computation of
the actions of the matrix cosine, sine, 
hyperbolic cosine, and hyperbolic sine functions.
\Alg s exist for computing these matrix functions,
such as those in \cite{ahr15}, \cite{hahi05c},
but we are not aware of any existing \alg s for 
computing their actions.

Applications where these actions are required include differential
equations (as discussed below) and network analysis
\cite{ehh08}, \cite{kgg12}.
Furthermore, the proposed algorithm can also be utilized to compute the action 
of the matrix exponential or $\varphi$ functions at different time steps. 
This, in return, finds an application in the efficient implementation of 
exponential integrators \cite{ho10}. 
One distinctive feature of the algorithm proposed is that it avoids complex 
arithmetic for a real matrix. 
This characteristic can be exploited to use only real arithmetic in the 
computation of the matrix exponential as well, 
if the matrix is real but the step argument complex.
This is useful for higher order splitting methods
\cite{aeoa09}, or for the solution of the Schr\"odinger equation, 
where the problem can be rewritten so that the step argument is complex 
and the matrix is real (see~\cref{eg:schroedinger}).

One line of attack is to develop \alg s for $f(A)V$ for each of these
four $f$ individually.
An \alg\ \expmv\ of Al-Mohy and Higham \cite{alhi11} 
for computing the action of the
matrix exponential relies on the scaling and powering relation
$\eu^A b = (\eu^{A/s})^sb$, for nonnegative integers $s$, and uses a Taylor
\py\ approximation to $\eu^{A/s}$.
The trigonometric functions $\cos$ and $\sin$
do not enjoy the same relation,
and while the double- and triple-angle formulas
$\cos (2A) = 2\cos^2(A) - I$ and
$\sin (3A) = 3\sin (A) - 4\sin^3 (A)$
can be successfully used in computing the cosine and sine~\cite{ahr15},
they do not lend themselves to computing the action of these functions.
For this reason our focus will be on exploiting the \alg\ of \cite{alhi11}
for the action of the matrix exponential.
While this approach may not be optimal for each of the four $f$,
we will show that it leads to a numerically reliable \alg\
and has the advantage that it allows the use of existing software.

The matrix cosine and sine functions arise in solving the
system of second order differential equations
\begin{align*}
\frac{\mathrm{d}^2}{\mathrm{d}t^2}y + A^2 y = 0, \qquad y(0)=y_0,\quad y'(0)=y_0',
\end{align*}
whose solution is given by
\begin{align*}
    y(t) = \cos(tA)y_0 + A^{-1}\sin(tA)y_0'.
\end{align*}
Note that $A^2$ is the given matrix,
so $A$ may not always be known or easy to obtain.
By rewriting this system as a first order system of twice the dimension the
solution can alternatively be obtained as the first component of the action
of the matrix exponential:
\begin{align}\label{exp2by2}
  \begin{bmatrix}
  y(t) \\ y(t)'
  \end{bmatrix}
  &=
\exp\left(
  t\begin{bmatrix}
  0 & I\\-A^2 & 0
  \end{bmatrix}
  \right)
  \begin{bmatrix}
  y_0\\y_0'
  \end{bmatrix}
  =
  \begin{bmatrix}
  \cos(tA) & A^{-1}\sin(tA)\\-A\sin(tA)&\cos(tA)
  \end{bmatrix}
  \begin{bmatrix}
  y_0\\y_0'
  \end{bmatrix}\\
  &=
  \begin{bmatrix}
  \cos(tA)y_0 + A^{-1}\sin(tA)y_0'\\-A\sin(tA)y_0 + \cos(tA)y_0'
  \end{bmatrix}. \nonumber
\end{align}
By setting $y_0 = b$ and $y_0' = 0$,
or         $y_0 = 0$ and $y_0' = b$,
and solving a linear system with $A$ or multiplying by $A$,
\resp, we obtain $\cos(tA)b$ and $\sin(tA)b$.
However, as a general purpose algorithm,
making use of \expmv\ from \cite{alhi11},
this approach has several disadvantages.
First, each step requires two matrix--vector products with $A$,
when we would hope for one.
Second, because the block matrix has zero trace, no shift is applied by
\expmv, so an opportunity is lost to reduce the norms.
Third, the coefficient matrix is nonnormal (unless $A^2$ is orthogonal),
which can lead to higher computational cost \cite{alhi11}.

We recall that all four of the functions addressed here can be expressed
as linear combinations of exponentials~\cite[chap.~12]{high:FM}:
\begin{subequations}\label{eq:both_trig_funs}
\begin{alignat}{2}\label{eq:trigh_funs}
 \cosh A  &= \tfrac{1}{2}(\eu^{A}+ \eu^{-A}), &\qquad
 \sinh A  &= \tfrac{1}{2}(\eu^{A}- \eu^{-A}), \\
\label{eq:trig_funs}
 \cos A  &= \tfrac{1}{2}(\eu^{\iu A}+ \eu^{-\iu A}),  &\qquad
 \sin A  &= \tfrac{-\iu}{2}(\eu^{\iu A}- \eu^{-\iu A}).
\end{alignat}
\end{subequations}
Furthermore, we have
\begin{align}\label{eq:trig_real}
  \eu^{\iu A} &= \cos A + \iu\sin A,
\end{align}
which implies that for real $A$,
$\cos A = \Re \eu^{\iu A}$ and
$\sin A = \Im \eu^{\iu A}$.
The main idea of this paper is to exploit these formulas to compute
$\cos(A) V$, $\sin(A) V$,
$\cosh(A) V$, and $\sinh(A) V$
by computing
$\eu^{\beta A}V$ and
$\eu^{-\beta A}V$ simultaneously with $\beta = \iu$ and $\beta = 1$,
using a modification of the \alg\ \expmv\ of \cite{alhi11}.


In \cref{sec:ba} we discuss the backward error of the underlying
computation.
In \cref{sec:alg} we present the algorithm and the computational aspects.
Numerical experiments are given in \cref{sec:ne},
and in \cref{sec.conc} we offer some concluding remarks.

\section{Backward error analysis}\label{sec:ba}

The aim of this section is to bound the backward error for
the approximation of $f(A)V$ using truncated Taylor series expansions of
the exponential,
for the four functions $f$ in~\cref{eq:both_trig_funs}.
Here, backward error is with respect to truncation errors in the
approximation, and exact computation is assumed.

We will use the analysis of Al-Mohy and Higham \cite{alhi11},
with refinements to reflect the presence of two related exponentials
in each of the definitions of our four functions.

It suffices to consider the approximation of $\eu^{A}$,
since the results apply immediately to $\eu^AV$.
We consider a general approximation $r(A)$,
where $r$ is a rational function,
since when $r$ is a truncated Taylor series no simplifications accrue.

Since $A$ appears as $\pm A$ and $\pm \iu A$ in \cref{eq:both_trig_funs},
in order to cover all cases
we treat $\b A$, where $|\b| \le 1$.
Consider the matrix
$$
      G = \eu^{-\b A} r(\b A) - I.
$$
With $\log$ denoting the principal matrix logarithm \cite[sec.~1.7]{high:FM},
let
\begin{equation}\label{Eeq}
  E = \log( \eu^{-\b A} r(\b A)) = \log (I + G),
\end{equation}
where $\rho(G) < 1$ is assumed for the existence of the logarithm.
We assume that $r$ has the property that
$r(X) \to \eu^X$ as $X \to 0$,
which is enough to ensure that $\rho(G) < 1$ for small enough $\b A$.

Exponentiating \cref{Eeq},
and using the fact that all terms commute
(each being a function of $A$), we obtain
$$
     r(\b A) = \eu^{ \b A + E},
$$
so that $E$ is the backward error matrix for the approximation.

For some positive integer $\ell$ and some radius of convergence $d > 0$
we have, from \cref{Eeq}, the convergent power series expansion
$$
      E = \sum_{i=\ell}^{\infty} c_i (\b A)^i, \qquad |\b|\rho(A) < d.
$$
We can bound $E$ by taking norms to obtain
\begin{equation}\label{Ebound}
   \norm{E} \le
    \sum_{i=\ell}^{\infty} |c_i| \, \norm{\b A}^i =: g( \norm{\b A} ).
\end{equation}
Assuming that $g(\th) = O(\th^2)$, the quantity
\begin{align}\label{eq:thhat}
   \thhat :=\max\{\,\th >0 : \, \th^{-1}g(\th) \le \tol\,\}
\end{align}
exists and we have the backward error result that
$\norm{\b A} \le \thhat$ implies
$r(\b A) = \eu^{\b A + E}$, with $\norm{E}\le \tol\, \norm{\b A}$.
Here $\tol$ represents the tolerance specified for the backward error.

In practice, we use scaling to achieve the required bound on $\norm{\b A}$,
so our approximation is $r(\b A/s)^s$ for some nonnegative integer $s$.
With $s$ chosen so that $\norm{\b A/s} \le \thhat$, we have
$$
  r(\b A/s)^s = \eu^{\b A + sE}, \qquad
   \frac{\norm{sE}}{\norm{\b A}} \le \tol.
$$

The crucial point is that since
$g(\norm{\b A} ) = g(|\b| \norm{A}) \le g(\norm{A})$, for all $|\b| \le 1$,
the parameter $s$ chosen for $A$ can be used for $\b A$.
Consequently, the original analysis gives the same bounds for
$\pm A$ and $\pm \iu A$ and the same parameters can be used for the computation
of all four of these functions.
This result does not state that the backward error is the same for each $\b$,
but rather the weaker result that each of the backward errors
satisfies the same inequality.


In practice, we use in place of
$\norm{\b A}$ in \cref{Ebound} the quantity $\a_p(\b A)$, where
\begin{equation}\label{eq:alphap}
   \a_p(X) = \max(d_p,d_{p+1}), \quad d_p = \norm{X^p}^{1/p},
\end{equation}
for some $p$ with $\ell \ge p(p-1)$,
which gives potentially much sharper bounds, as shown in
\cite[Thm.\ 4.2(a)]{alhi09a}.



Our conclusion is that all four matrix functions
appearing in \cref{eq:both_trig_funs}
can be computed in a backward stable manner with the same parameters.
As we will see in the next section, the computations can even be combined to
compute the necessary values simultaneously.

\section{The basic algorithm}\label{sec:alg}

As our core \alg\ for computing the action of the matrix exponential we take
the truncated Taylor series \alg\ of Al-Mohy and Higham~\cite{alhi11}. 
We recall some details of the algorithm below.
Other \alg s, such as the Leja method presented in~\cite{ckor15},
can be employed in a similar fashion, though the details will be different.
The truncated Taylor series \alg\ takes
\begin{align*}
r(A) \equiv T_m(A)=\sum_{j=0}^m\frac{A^j}{j!}.
\end{align*}
As suggested in \cite{alhi11}, we limit the degree 
$m$ of the polynomial approximant $T_m$ to $\mmax = 55$.
In order to allow the algorithm to work for general matrices, 
with no restriction on the norm, 
we introduce a scaling factor $s$ and assume that $\eu^{s^{-1}A}$ 
is well approximated by $T_m(s^{-1}A)$. 
From the functional equation of the exponential we have
\begin{align*}
  \eu^{A}V = \left(\eu^{s^{-1}A}\right)^sV,
\end{align*}
and so the recurrence
\begin{align*}
    V_{i+1} = T_m(s^{-1}A)V_i, \quad i=0, \ldots, s-1,\quad V_0=V,
\end{align*}
yields the approximation $V_s\approx\eu^{A}V$.
For a given $m$,
the function $g$ in \cref{Ebound} has 
$\ell = m+1$.


The parameter $\thhat$ in~\cref{eq:thhat}, which we now denote by $\th_m$,
depends on the \py\ degree $m = \ell-1$ and the tolerance $\tol$, and its
values are given in \cite[Table~3.1]{alhi09a} for IEEE single precision
arithmetic and double precision arithmetic.
The cost function
\begin{align}
  C_m(A) = ms = m\max\left\{1,\left\lceil\alpha_p(A)/\th_m\right\rceil\right\}
\end{align}
measures the number of matrix--vector products,
and the optimal degree $m_*$ is chosen in \cite{alhi11} such that
\begin{align}\label{eq:minimalcost}
C_{m_*}(A)=\min\left\{
  m\left\lceil\alpha_p(A)/\th_m\right\rceil\!:\ 2\leq p\leq \pmax,
  \; p(p-1)-1\leq m \leq \mmax
  \right\}.
\end{align}
Here, $\mmax$ is the maximal admissible Taylor \py\ degree
and $\pmax$ is the maximum value of $p$ such that $p(p-1)\le \mmax + 1$, 
to allow the use of \cref{eq:alphap}. 
Furthermore, $\pmax=8$ is the default choice in the implementation.
The parameters $m_*$ and $s$ are determined by \cref{cf:ms},
which is \cite[Code Fragment 3.1]{alhi11}.

\begin{algorithm}
\caption{$[m_*,s]=\text{parameters}(A,B,\tol)$\label{cf:ms}}
This code determines $m_*$ and $s$ given 
$A\in\C^{\nbyn}$, $B\in\C^{\nbyq}$, 
$\tol$, $\mmax$, and $\pmax$.
It is assumed that the $\a_p$ in
\cref{eq:minimalcost} are estimated using the 
block $1$-norm estimation \alg\ of Higham and Tisseur
\rom{\cite{hiti00n}} with two columns. 

\begin{code}
if
   $\normi{A} \le \dfrac{4}{q} \dfrac{\th_{\mmax}}{\mmax}
                            p_{\max}(p_{\max}+3)$ \\
\>$m_* = \argmin_{1\le m\le \mmax} m \lceil \normi{A}/\th_m \rceil$\\
\>$s = \lceil \normi{A}/\th_{m_*} \rceil$\\
else\\
\> Let $m_*$ be the smallest $m$ achieving the minimum in \cref{eq:minimalcost}.\\
\> $s = \max\bigl\{ \Cmstar(A)/m_*, 1\bigr\}$\\
end
\end{code}

\end{algorithm}

A further reduction of the cost can be 
achieved by choosing an appropriate point $\mu$ as the centre 
of the Taylor series expansion. 
As suggested in \cite{alhi11}, 
the shift is selected such that the Frobenius norm 
$\normF{A-\mu I}$ is minimized, that is,
$\mu=\operatorname{trace}(A)/n$.

\Alg~3.2 of \cite{alhi11} computes $\eu^AB = [\eu^A b_1,\dots,\eu^Ab_q]$,
that is, the action of $\eu^{A}$ on several vectors.
The following modification of that \alg\ essentially computes
$[\eu^{\tau_1A} b_1,\dots,\eu^{\tau_qA}b_q]$:
the actions at different $t$ values.
The main difference between our \alg\ and
\cite[Alg.~3.2]{alhi11} is in \cref{alg:recurence_line} of \cref{alg:basic_alg},
where a scalar ``$t$'' has been changed to a (block) diagonal matrix
$$
      D(\tau) = D(\tau_1,\tau_2,\dots,\tau_q) \in\C^{q \times q}
$$
that we define precisely below.
The exponential computed in \cref{alg:eta_line} of \cref{alg:basic_alg} 
is therefore a matrix exponential.

For simplicity we omit balancing, but it can be applied in the same way as
in \cite[Alg.~3.2]{alhi11}.

\begin{algorithm}\caption{$F = \textbf{F}(D(\tau),A,B)$}\label{alg:basic_alg}
For $\tau_1,\ldots,\tau_q\in\C$,
$A\in\C^{\nbyn}$, $B\in\C^{\nbyq}$, and
a tolerance $\tol$ the following algorithm produces a matrix
$F = g\circ g \circ \cdots \circ g(B) \in\C^{n \times q}$ 
\parens{$s$-fold composition}
where
$g(B) \approx \left(B + \frac{1}{s}\At BD(\tau) + \frac{1}{2!s^2}\At^2BD(\tau)^2 + 
            \frac{1}{3!s^3} \At^3BD(\tau)^3 + \cdots\right)J$,
where $\At$ and $J$ are given in the \alg.

\begin{code}
  $\At=A-\mu I$, where $\mu=\operatorname{trace}(A)/n$\\
  $t=\max_k|\tau_k|$\\
  if $t\|\At\|_1=0$\\
  \> $m_*=0$, $s=1$\\
  else\\
  \> $[m_*,s]=\text{parameters}(t\At,B,\tol)$ \% \cref{cf:ms}\\
  end\\\label{alg:eta_line}
  $F=B$, $J = \eu^{\mu D(\tau)/s}$\\
  for $k=1:s$\\
  \> $c_1 = \|B\|_\infty$\\\label{alg:innerloop_start}
  \> for $j=1:m_*$\\\label{alg:recurence_line}
  \> \> $B=\At B(D(\tau)/(sj)),$\\
  \> \> $c_2=\|B\|_\infty$\\
  \> \> $F = F+ B$\\
  \> \> if $c_1 + c_2 \leq \tol \|F\|_\infty$, break, end\\
  \> \> $c_1=c_2$\\\label{alg:innerloop_end}
  \> end\\\label{alg:Dmult}
  \> $F = F J$, $B = F$\\
  end
\end{code}
\end{algorithm}

Note that for $\At = A - \mu I$,
$B = [b_1, b_2]$ and 
$D(\tau) = \diag(\tau_1,\tau_2)$ we have $g(B) = [g_1, g_2]$
in \cref{alg:basic_alg}, with 
\begin{align*}
   g_j &\approx \left(b_j + \frac{\At}{s} b_j \tau_j
                   + \frac{\At^2}{s^22!} b_j \tau_j^2
                   + \frac{\At^3}{s^33!} b_j \tau_j^3 + \cdots\right)
                   \rme^{\mu\tau_j/s}\\
        &= \eu^{(A-\mu I)\tau_j/s} \rme^{\mu\tau_j/s}b_j=
        \eu^{A\tau_j/s}b_j,
\end{align*}
for $j = 1,2$.
Therefore we can compute the four actions of interest by selecting
appropriately
$\tau_1$, $\tau_2$, and $B$ and carrying out some postprocessing.
For given $t$, $A$, and $b$ we can compute, 
with $\textbf{F}$ as in \cref{alg:basic_alg},
\begin{enumerate}
\item an approximation of $\cosh(tA)b$ by
\begin{align*}
	B=[b/2,b/2], \quad D(\tau)=\begin{bmatrix}t& 0\\0 & -t\end{bmatrix},\quad
  \cosh(tA)b=\textbf{F}(D(\tau),A,B)\begin{bmatrix}1\\1\end{bmatrix};
\end{align*}
\item an approximation of $\sinh(tA)b$ by
\begin{align*}
	B=[b/2,b/2], \quad D(\tau)=\begin{bmatrix}t& 0\\0 & -t\end{bmatrix},
  \quad 
  \sinh(tA)b=\textbf{F}(D(\tau),A,B)\begin{bmatrix}1\\-1\end{bmatrix};
\end{align*}
\item an approximation of $\cos(tA)b$ by
\begin{align*}
	B=[b/2,b/2], \quad D(\tau)=\begin{bmatrix}\iu t& 0\\0 & -\iu t\end{bmatrix},
  \quad 
  \cos(tA)b=\textbf{F}(D(\tau),A,B)\begin{bmatrix}1\\1\end{bmatrix};
\end{align*}
\item an approximation of $\sin(tA)b$ by
\begin{align*}
	B=[b/2,b/2], \quad D(\tau)=\begin{bmatrix}\iu t& 0\\0 & -\iu t\end{bmatrix},
  \quad 
  \sin(tA)b=\textbf{F}(D(\tau),A,B)\begin{bmatrix}-\iu\\\iu\end{bmatrix}.
\end{align*}
\end{enumerate}


Obviously, since they share the same $B$ and $D(\tau)$,
we can combine the computation of
$\cosh(tA)b$ and $\sinh(tA)b$,
and $\cos(tA)b$ and $\sin(tA)b$, respectively,
without any additional cost.
Furthermore, it is also possible to combine the computation of all four matrix
functions by a single call to $\textbf{F}(D(\tau),A,B)$ with
$B = [b,b,b,b]/2$ and $D(\tau)=\diag[t,-t,\iu t, -\iu t]$.

If $A$ is a real matrix the computation of $\cos(tA)b$ and $\sin(tA)b$ can be
performed entirely in real arithmetic, as we now show.
We need the formula
\begin{equation}\label{exp2by2_small}
   \exp\left(
     \mybmatrix{0 & t \cr
               -t & 0 \cr} \right)
    = \mybmatrix{ \cos t & \sin t\cr
                 -\sin t & \cos t\cr}.
\end{equation}


\begin{lemma}\label{lem:real_case}
For $A\in\R^{\nbyn}$, $b=b_\r + \iu b_\i \in\C^{n}$, and
$t\in\R$, the vector
$f = f_\r+\iu f_\i = \text{\rm\bf F}(D(\iu t),A,b) \approx \eu^{\iu tA}b$ 
can be computed
in real arithmetic by
\begin{equation}\label{fcomplex}
\begin{bmatrix}f_\r,&f_\i\end{bmatrix}=
  \text{\rm\bf F}\!\left(D(\tau), A,
              \begin{bmatrix} b_\r, b_\i\end{bmatrix}\right),
  \quad\mbox{where} \quad\tau=\iu t, \quad
  D(\tau) = \begin{bmatrix} 0 & t\\-t & 0\end{bmatrix}.
\end{equation}
Furthermore, the resulting vectors $f_\r$ and $f_\i$ are approximations of, \resp,
\begin{align*}
  f_\r = \cos(tA)b_\r - \sin(tA)b_\i, \qquad
  f_\i = \sin(tA)b_\r + \cos(tA)b_\i.
\end{align*}
\end{lemma}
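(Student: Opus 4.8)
The plan is to exploit the fact that Algorithm~\ref{alg:basic_alg} is \emph{linear} in the input block $B$ for fixed $A$ and $D(\tau)$, together with the explicit formula \cref{exp2by2_small} for the exponential of the $2\times 2$ block appearing in $D(\tau)$. First I would record that when $D(\tau)=\diag(\iu t,-\iu t)$ the algorithm applied to the column $b=b_\r+\iu b_\i$ produces $\eu^{\iu tA}b$; this is exactly the verification already carried out in the text following Algorithm~\ref{alg:basic_alg} (the $g_j$ computation), specialized to $q=1$. The goal is then to show that replacing $D(\tau)=\diag(\iu t,-\iu t)$ by the real rotation block $D(\tau)=\left[\begin{smallmatrix}0&t\\-t&0\end{smallmatrix}\right]$ and $b$ by the real block $[b_\r,\ b_\i]$ yields a computation that proceeds entirely in real arithmetic and whose output is $[f_\r,\ f_\i]$ with $f_\r+\iu f_\i=\eu^{\iu tA}b$.

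The key steps, in order: (1)~Observe that the rotation block $N=\left[\begin{smallmatrix}0&t\\-t&0\end{smallmatrix}\right]$ is $W\,\diag(\iu t,-\iu t)\,W^{-1}$ for the fixed unitary $W=\tfrac{1}{\sqrt2}\left[\begin{smallmatrix}1&1\\ \iu&-\iu\end{smallmatrix}\right]$ (equivalently $\left[\begin{smallmatrix}1&\iu\\1&-\iu\end{smallmatrix}\right]$ up to scaling), and that $[b_\r,\ b_\i]=[b/2,\ b/2]\cdot(2W\cdot\text{const})$ — i.e.\ the real block $[b_\r,b_\i]$ is obtained from the complex block $[b/2,b/2]$ by right-multiplication by the same constant matrix that conjugates the diagonal $D$ into $N$. (2)~Since every operation in the inner loop of Algorithm~\ref{alg:basic_alg} that touches the ``$\tau$-side'' is a \emph{right} multiplication by powers of $D(\tau)/(sj)$, and the final step is right multiplication by $J=\eu^{\mu D(\tau)/s}$, the whole map $B\mapsto \textbf{F}(D(\tau),A,B)$ intertwines with right multiplication by $W$: running the algorithm with $(D,B)=(\diag(\iu t,-\iu t),[b/2,b/2])$ and then right-multiplying by the constant change-of-basis matrix gives the same result as running it with $(N,[b_\r,b_\i])$. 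Concretely, use $\eu^{\mu N/s}$, which by \cref{exp2by2_small} equals $\left[\begin{smallmatrix}\cos(\mu t/s)&\sin(\mu t/s)\\-\sin(\mu t/s)&\cos(\mu t/s)\end{smallmatrix}\right]$ — a \emph{real} matrix — so the $J$ step stays real; and all the $\At B (N/(sj))$ steps keep $B$ real because $\At$ and $N$ are real. This establishes that the computation in \cref{fcomplex} is genuinely real-arithmetic and that its output, call it $[f_\r,f_\i]$, satisfies $[f_\r,f_\i]=\eu^{t\At/s}\cdots$ assembled into $[b_\r,b_\i]\exp\!\big(\text{appropriate real }2\times2\big)$, i.e.\ $[f_\r,\ f_\i]$ is the image of $\eu^{\iu tA}[b/2,b/2]$ under the constant real transformation, which one checks column-by-column equals $[\Re(\eu^{\iu tA}b),\ \Im(\eu^{\iu tA}b)]$ after accounting for the $b_\r,b_\i$ split. (3)~Finally, expand $\eu^{\iu tA}(b_\r+\iu b_\i)=(\cos(tA)+\iu\sin(tA))(b_\r+\iu b_\i)$ using \cref{eq:trig_real} (valid since $A$ is real, so $\cos(tA)$ and $\sin(tA)$ are real) and separate real and imaginary parts to get $f_\r=\cos(tA)b_\r-\sin(tA)b_\i$ and $f_\i=\sin(tA)b_\r+\cos(tA)b_\i$, as claimed.

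The main obstacle is bookkeeping the \emph{right}-multiplication structure carefully enough to justify step~(2): one must check that the early-termination test (the line \texttt{if $c_1+c_2\le\tol\|F\|_\infty$, break}) and the norm bookkeeping behave identically — i.e.\ that the intertwining is exact at the level of the actual algorithm and not merely at the level of the idealized Taylor series it approximates. Since $W$ (suitably normalized) is orthogonal up to a scalar, the $\infty$-norms of the iterates differ only by a fixed constant factor, so the break condition triggers at the same iteration in both runs; this is the one place where a short explicit argument is needed rather than a one-line appeal to linearity. Everything else — the reality of $\At$, of $N$, and of $\eu^{\mu N/s}$ via \cref{exp2by2_small}, and the final real/imaginary split — is routine.
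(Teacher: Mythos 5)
Your overall strategy---conjugating the diagonal matrix $\diag(\iu t,-\iu t)$ into the real rotation block and exploiting the fact that $\textbf{F}$ acts on $B$ only by left multiplication by functions of $\At$ and right multiplication by functions of $D(\tau)$, so that $\textbf{F}(M^{-1}DM,A,BM)=\textbf{F}(D,A,B)M$ for a constant invertible $M$---is sound and genuinely different from the paper's proof. The paper instead expands $g([b_\r,b_\i])$ directly, uses $D(\tau)^{2}=-t^{2}I$ to collect even and odd powers into the Taylor series of $\cos(t\At/s)$ and $\sin(t\At/s)$, and then applies \cref{exp2by2_small} and the addition formulas to assemble $C=\bigl[\begin{smallmatrix}\cos(tA/s)&-\sin(tA/s)\\ \sin(tA/s)&\cos(tA/s)\end{smallmatrix}\bigr]$ and finally $C^{s}$.

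However, your step (1) is wrong as stated: for genuinely complex $b$ the block $[b_\r,\,b_\i]$ is \emph{not} of the form $[b/2,\,b/2]M$ for any constant $2\times2$ matrix $M$, because both columns of $[b/2,\,b/2]M$ are scalar multiples of $b$, whereas $b_\r=(b+\bar b)/2$ and $b_\i=(b-\bar b)/(2\iu)$ generally are not. The correct starting block is $[b/2,\,\bar b/2]$: with $M=\bigl[\begin{smallmatrix}1&-\iu\\ 1&\iu\end{smallmatrix}\bigr]$ one has $[b/2,\,\bar b/2]\,M=[b_\r,\,b_\i]$ and $M^{-1}\diag(\iu t,-\iu t)M=\bigl[\begin{smallmatrix}0&t\\-t&0\end{smallmatrix}\bigr]$, so the intertwining gives $[f_\r,f_\i]\approx[\eu^{\iu tA}b/2,\ \eu^{-\iu tA}\bar b/2]\,M$; you then need the additional observation that $\eu^{-\iu tA}\bar b=\overline{\eu^{\iu tA}b}$ (valid because $A$ is real) to identify the two columns as $\Re(\eu^{\iu tA}b)$ and $\Im(\eu^{\iu tA}b)$, and hence via \cref{eq:trig_real} as $\cos(tA)b_\r-\sin(tA)b_\i$ and $\sin(tA)b_\r+\cos(tA)b_\i$. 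With that repair your argument goes through. A smaller inaccuracy: right multiplication by $M$ does not rescale $\|\cdot\|_\infty$ by a fixed constant, so the early-termination test need not fire at the same inner iteration in the two runs; since the lemma only asserts an approximation this does not damage the conclusion, but your justification of that point should be dropped or weakened.
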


\begin{proof}
With $B = [b_\r,b_\i]$  we have 
\begin{align*}
   g(B) &\approx \left(\mybmatrix{b_\r ,b_\i\cr }
          + t\mybmatrix{ \dfrac{\At b_\r}{s} , \dfrac{\At b_\i}{s}\cr}
             \mybmatrix{ 0 & 1 \cr -1 & 0 \cr}
          + t^2\mybmatrix{ \dfrac{\At^2b_\r}{s^2 2!} 
                  , \dfrac{\At^2b_\i}{s^2 2!}\cr }\right.\\
        & \left.\qquad
          + t^3\mybmatrix{ \dfrac{\At^3b_\r}{s^3 3!} 
                  , \dfrac{\At^3b_\i}{s^3 3!}\cr }
             \mybmatrix{ 0 & 1 \cr -1 & 0 \cr}
          + \cdots\right)
          \exp\left(
          \mybmatrix{0 & t\mu/s \cr
                    -t\mu/s & 0 \cr} \right)
\end{align*}
and on collecting terms, applying \cref{exp2by2_small}
and the addition formulas \cite[Thm.~12.1]{high:FM},
and recalling that $\At = A - \mu I$, we find that 
\begin{align*}
  g(B) &\approx \mybmatrix{ \cos\left( \frac{t\At}{s} \right)b_\r
                   -\sin\left( \frac{t\At}{s} \right)b_\i, &
                    \sin\left( \frac{t\At}{s} \right)b_\r
                   +\cos\left( \frac{t\At}{s} \right)b_\i \cr}
                   \mybmatrix{ \cos \left(\frac{t\mu}{s}\right) 
                             & \sin \left(\frac{t\mu}{s}\right)\cr
                 -\sin \left(\frac{t\mu}{s}\right) 
                    & \cos \left(\frac{t\mu}{s}\right)\cr} \\
      &= \mybmatrix{ \cos\left( \frac{tA}{s} \right) & 
                     -\sin\left( \frac{tA}{s} \right) \cr
                     \sin\left( \frac{tA}{s} \right) & 
                     \cos\left( \frac{tA}{s} \right) \cr} 
         \mybmatrix{ b_\r \cr b_\i \cr}
       =:   C \mybmatrix{ b_\r \cr b_\i \cr}.
\end{align*}
Hence, overall, using \cref{exp2by2_small} again,
$$
   \text{\rm\bf F}(D(\iu t)A,b) \approx C^s \mybmatrix{ b_\r \cr b_\i \cr}
    = \mybmatrix{ \cos (tA) & -\sin (tA) \cr \sin (tA) & \cos (tA) \cr}
      \mybmatrix{ b_\r \cr b_\i \cr},
$$
as required.
\end{proof}

As a consequence of \cref{lem:real_case} we can compute,
with $D$ defined in \cref{fcomplex},
\begin{enumerate}
\item an approximation of $\cos(tA)b$ by
\begin{align*}
 B=[b,0], \quad \tau=\iu t,\quad
 D(\tau)=\begin{bmatrix}0& t\\-t & 0\end{bmatrix},
  \quad
  \cos(tA)b=\textbf{F}(D(\tau),A,B)\begin{bmatrix}1\\0\end{bmatrix};
\end{align*}
\item an approximation of $\sin(tA)b$ by
\begin{align*}
 B=[b,0], \quad \tau=\iu t,\quad
 D(\tau)=\begin{bmatrix}0& t\\-t & 0\end{bmatrix},
  \quad
  \sin(tA)b=\textbf{F}(D(\tau),A,B)\begin{bmatrix}0\\1\end{bmatrix}.
\end{align*}
\end{enumerate}
We compute the matrix exponential $J$ in \cref{alg:eta_line} of 
\cref{alg:basic_alg} 
by making use of \cref{exp2by2_small}.

We make three remarks.

\begin{remark}[Other cases.]
\Cref{alg:basic_alg} can also be used to compute exponentials at
different time steps and with the use of \cite[Thm.~2.1]{alhi11} it can be
used to compute linear combinations of $\varphi$ functions at different
time steps (see, e.g., \cite[sec.~10.7.4]{high:FM} for details of the
$\varphi$ functions).  This in turn is useful for the implementation of
exponential integrators~\cite{ho10}.  
The internal stages of an
exponential integrator often require the evaluation of a $\varphi$ function
at intermediate steps, e.g., $\varphi(c_k t A)b$ for $0<c_k\leq1$ and
$k\geq 1$.  Although the new algorithm can be used in these situations it
might not be optimal for each of the $c_k$ values as the parameters $m_*$
and $s$ are chosen for the largest value of $t$ and might not be optimal
for an intermediate point.  Nevertheless, the computation can be performed in
parallel for all the different values of $t$ and level-3 BLAS routines can
be used, which can speed up the process.
Furthermore, the algorithm could also be used to generate dense output,
in terms of the time step, as is sometimes desired for time integration.
\end{remark}

\begin{remark}\label{rem:expmvt}
We note that in \cite[Code Fragment 5.1, Alg.~5.2]{alhi11}
the authors also present an algorithm
to compute $\eu^{t_k A}b$ on equally spaced grid points $t_k=t_0+hk$ with
$h=(t_q-t_0)/q$.
With that code we can compute $\cosh(A)b$ and $\sinh(A)b$
by setting $t_0=-t$, $t_q=t$, and $q=1$, so that 
$b_1=\eu^{t_0A}b = \eu^{-tA}b$, $h=2t$, 
and $b_2=\eu^{hA}b_1 = \eu^{tA}b$.
This is not only slower than our approach, 
as the code now has to perform a larger time step and
compute the necessary steps consecutively and not in parallel,
but it can also cause instability.
In fact, for some of the matrices of \cref{eg:floatingpoint}
in \cref{sec:ne} we see a large error if we use 
\cite[Alg.~5.2]{alhi11} as outlined above.
Furthermore, as we compute with $\pm \beta$ we can optimize the algorithm by
using level-3 BLAS routines and we can avoid complex arithmetic by our direct
approach.
\end{remark}

\begin{remark}[Block version]\label{rem:block}
As indicated in the introduction it is sometimes required to compute
the action of our four functions
not on a vector but on a tall, thin matrix $V\in\C^{\nbynz}$.
It is possible to use \cref{alg:basic_alg} for this task.
One simply needs to repeat each $\tau_k$ value $n_0$ times and the matrix
$V$ needs to be repeated
$q$ times for each of the $\tau_k$ values
(this corresponds to replacing the vector $b$ by the matrix $V$ in the
definition of $B$).
This procedure can be formalized with the help of the Kronecker product
$X\otimes Y$.
We define the time matrix by $D(\tau)\otimes I_{n_0}$,
and the
postprocessing matrix $\tilde P$ by $P\otimes I_{n_0}$.
Furthermore, the matrix $B$ reads as $I_q\otimes V/2$.
For $V=[v_1, v_2]$ ($n_0=2$) the computation of $\cosh(tA)V$ becomes
\begin{align*}
  B=[v_1,v_2,v_1,v_2]/2, \quad
  \tilde{D}(\tau)=D(\tau)\otimes I_2 =
  \diag(t, t,-t, -t)
\end{align*}
and results in
\begin{align*}
\cosh(tA)V= \textbf{F}(\tilde{D}(\tau),A,B)\begin{bmatrix}I_2\\I_2\end{bmatrix}.
\end{align*}
\end{remark}

\section{Numerical experiments}\label{sec:ne}

Now we present some numerical experiments that
illustrate the behaviour of \cref{alg:basic_alg}.
All of the experiments were carried out in MATLAB R2015a (glnxa64) 
on a Linux machine
and for time measurements only one processor is used.
We work with three tolerances in \Alg~\ref{alg:basic_alg}, 
corresponding to half precision, single precision, and double precision,
\resp:
\begin{align*}
       \tolh &= 2^{-11} \approx 4.9 \times 10^{-4}, \\
       \tols &= 2^{-23} \approx 6.0 \times 10^{-8}, \\
       \told &= 2^{-53} \approx 1.1 \times 10^{-16}. 
\end{align*}
All computations are in IEEE double precision arithmetic.

We use the implementations of the \alg s of \cite{alhi11} from
\url{https://github.com/higham/expmv}, which are named
\expmv\ for  \cite[Alg.~3.2]{alhi11} and
\expmvtspan\ for \cite[Alg.~5.2]{alhi11}.
We also use the implementations \cosm\ and \sinm\ from
\url{https://github.com/sdrelton/cosm_sinm} of the \alg\ of 
\cite[Alg.~6.2]{ahr15} for computing the matrix sine and cosine;
the default option of using a Schur decomposition is chosen in the first
experiment, but no Schur decomposition is used in the second and third
experiments.
We note that we did not use the function \verb"cosmsinm" for a simultaneous 
computation as we found it less accurate than \cosm\ and \sinm\ in
\cref{eg:floatingpoint}. 

In order to compute $\cos(tA)b$ and $\sin(tA)b$ we use the
following methods.
\begin{enumerate}

\item
\trigmv{} denotes \cref{alg:basic_alg} with
  real or complex arithmetic (avoiding complex arithmetic when possible), 
  computing the two functions simultaneously.

\item
\trigexpmv{} denotes the use of \expmv, in two forms.
For a real matrix \expmv\ is called with
the pure imaginary step argument $\iu t$,
making use of \cref{eq:trig_real}.
For a complex matrix \expmv\ is called twice,
with step arguments $\iu t$ and $-\iu t$,
and \cref{eq:trig_funs} is used.

\item
\dense\ denotes the use of \cosm\ and \sinm\ to compute the dense matrices
$\cos(tA)$ and $\sin(tA)$ before the
multiplication with $b$.
\item \trigblock{} denotes the use of formula \cref{exp2by2} with $y_0=0$ and
$y'_0=b$. 
Therefore we need one extra matrix--vector product to compute $\sin(tA)b$. 
In order to compute the exponential we use \expmv{}. 
\end{enumerate}

For the computation of $\cosh(tA)b$ and $\sinh(tA)b$ we use the
following methods.
\begin{enumerate}

\item \trighmv{} denotes \cref{alg:basic_alg}, computing
  the two functions simultaneously.

\item \trighexpmv{} denotes the use of \expmv\
called twice
with $\pm t$ as step arguments.  

\item \expmvt{} denotes \cite[Alg.~5.2]{alhi11} called with
$t_0=-t$, $q=1$, and $t_q=t$, as discussed in of \cref{rem:expmvt}.

\item
\dense\ denotes the use of \cosm\ and \sinm\ to compute  
the dense matrices
$\cosh(tA)$ and $\sinh(tA)$
as $\cos(\iu t A)$ and $-\iu\sin(\iu t A)$, respectively,
before the multiplication with $b$.

\item \trighblock{} denotes the use of formula \cref{exp2by2} with $y_0=0$ and
$y'_0 = b$, where $\iu A$ is substituted for $A$.
We need one extra matrix--vector product to compute $\sinh(tA)b$. 
In order to compute the exponential we use \expmv{}. 
\end{enumerate}

All the methods except \dense\ support tolerances
$\tolh$, $\tols$, and $\told$, whereas \dense\ is designed to deliver
double precision accuracy.


In all cases, when \cref{cf:ms} is called to compute the optimal
scaling and truncation degree we use $m_{\mathrm{max}}=55$ and
$p_\mathrm{max}=8$.

We compute relative errors in the 1-norm, $\normi{x-\xhat}/\normi{x}$,
where $x = f(A)b$.
In \cref{eg:floatingpoint},
$\xhat$ denotes a reference
solution computed with the Multiprecision Computing Toolbox
\cite{adva-mct} at 100-digit precision.
In \cref{eg.large,eg:schroedinger}
the matrices are too large for multiprecision
computations so the reference solution $X$ is taken as that obtained via
$\cosm$ or $\sinm$.

\begin{eg}[Behavior for existing test sets]\label{eg:floatingpoint}
In this experiment we compare
\trigmv{}, \trigexpmv{}, and \dense.
We show only the results for $\cos$ and $\cosh$, 
as the results for $\sin$ and $\sinh$ are very similar.

As test matrices we use Set 1-3 from \cite[sec.~6]{alhi09a},
with dimensions $n$ up to $50$.
We remove all matrices from our test sets where any of the considered 
functions overflow;
the overflow also appears for the dense method considered and is due to 
the result being too large to represent.
The elements of the vector $b$ are drawn from the standard normal
distribution and are the same for each matrix.
We compare the algorithms for tolerances $\tolh$, $\tols$, and $\told$.


The relative errors are shown in \cref{fig:accuracy_cos},
with the test matrices ordered by decreasing \cn\ 
$\kappa_{\cos}$ of the matrix cosine.
The estimated condition number is computed by the \texttt{funm\_condest1}
function of the Matrix Function Toolbox~\cite{high-mft}.  The required
Fr\'{e}chet derivative is computed with the $2 \times 2$ block
form~\cite[sec.~3.2]{high:FM}.

\begin{figure}
\includegraphics{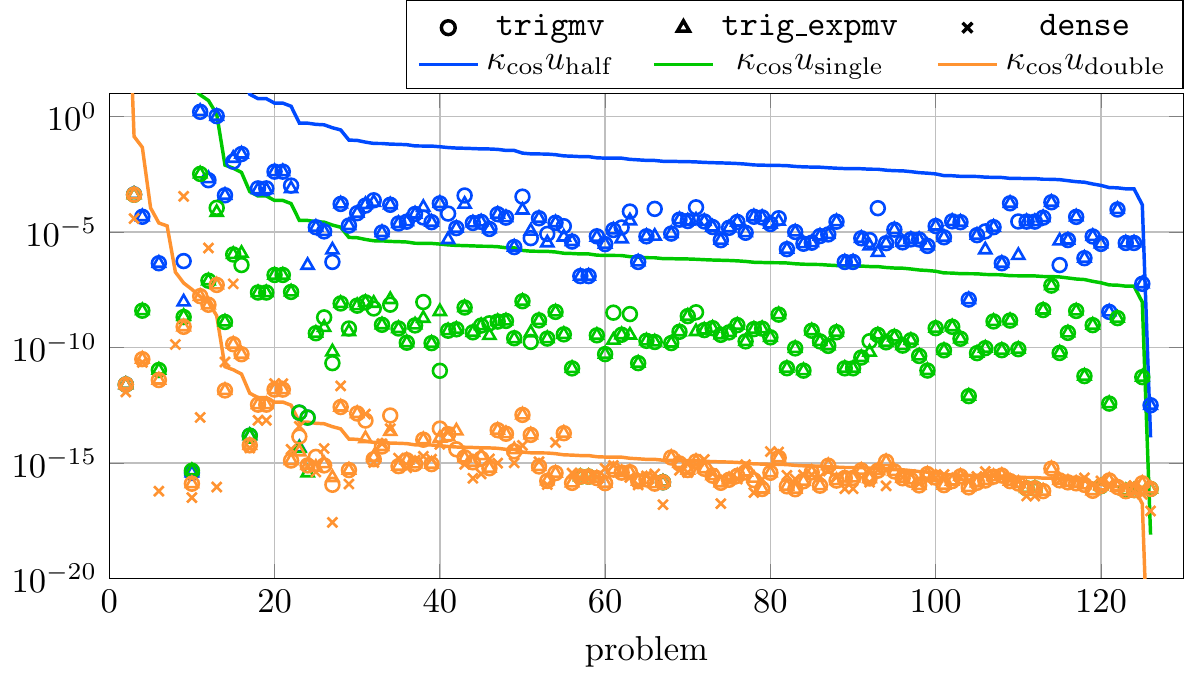}
\caption{\label{fig:accuracy_cos}Relative error in $1$-norm for computing
$\cos(A)b$ with three algorithms with tolerances
$\tolh$ (blue), $\tols$ (green), and $\told$ (orange).
The solid lines are the condition number multiplied by the tolerance.}

\end{figure}

\begin{figure}
\includegraphics{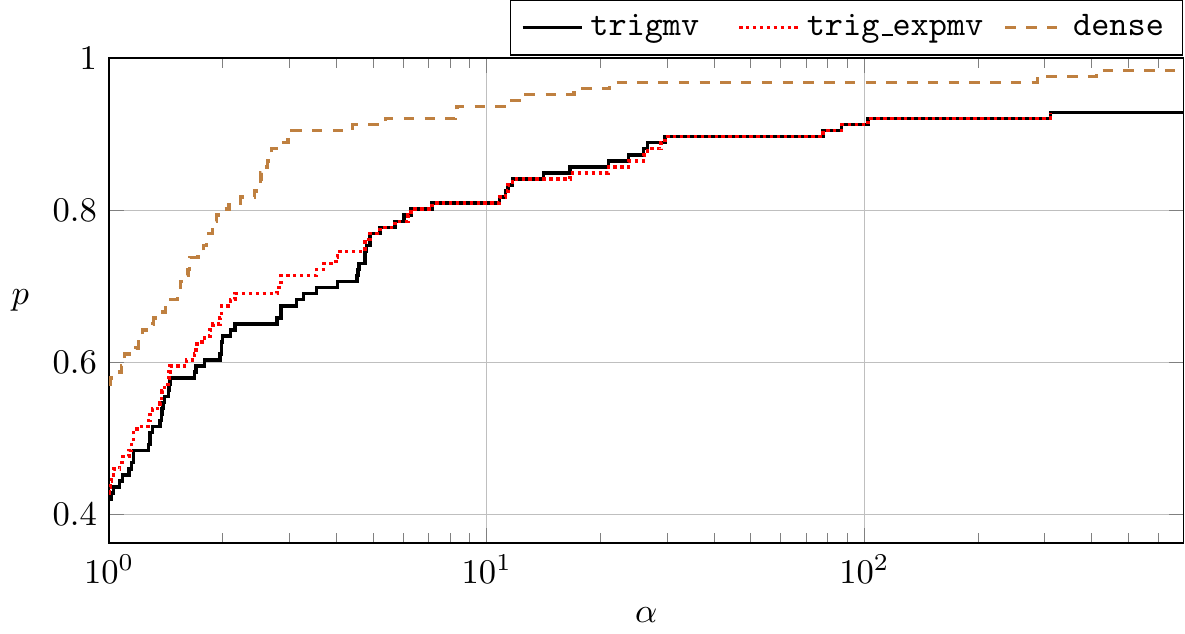}
\caption{\label{fig:performance_cos}
  Same data as in \cref{fig:accuracy_cos} for $\told$
  but presented as a performance profile.
  For each method, $p$ is the proportion of problems in which the error is 
  within a factor of $\alpha$ of the smallest error over all methods.
  }
\end{figure}



\begin{figure}
\includegraphics{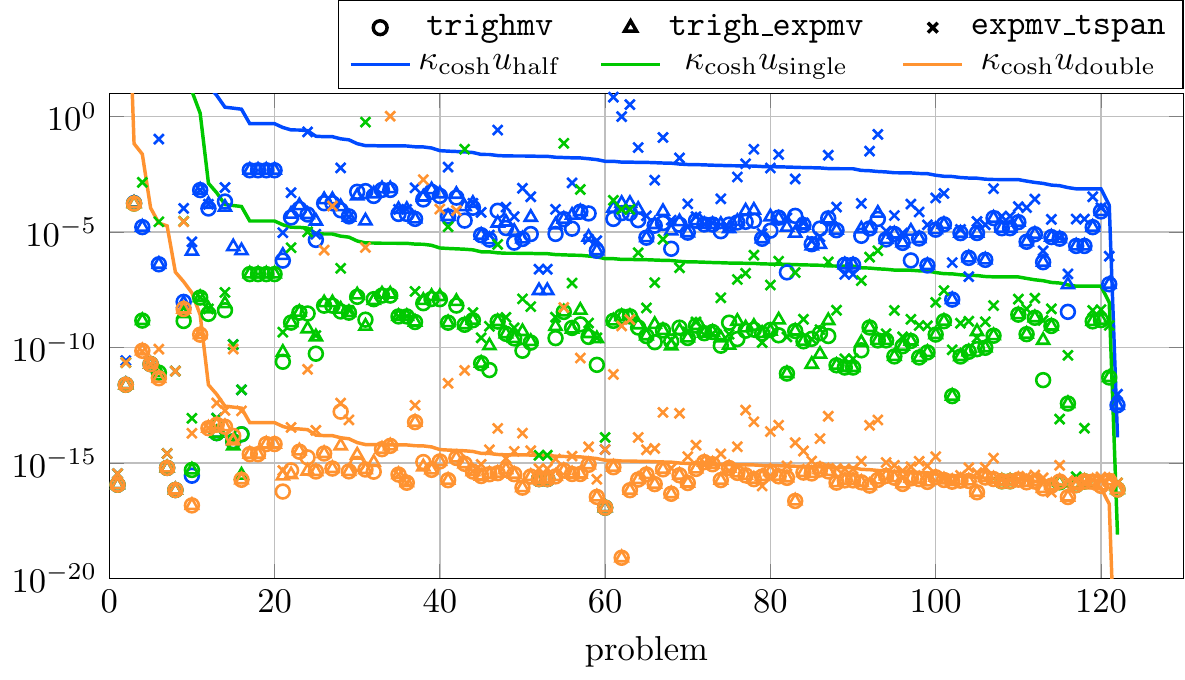}
\caption{\label{fig:accuracy_cosh}Relative error in $1$-norm for computing
$\cosh(A)b$ with tolerances
$\tolh$ (blue), $\tols$ (green), and $\told$ (orange).
The solid lines are the condition number multiplied by the tolerance.}
\end{figure}

\begin{figure}
\includegraphics{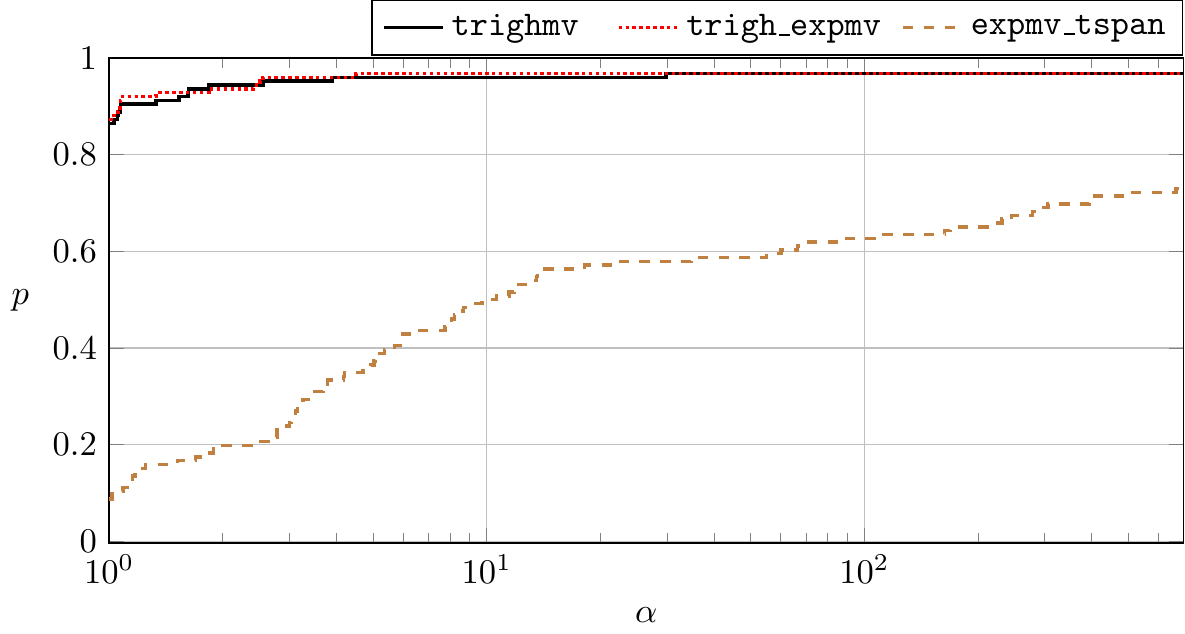}
\caption{\label{fig:performance_cosh}
Same data as in \cref{fig:accuracy_cosh}  for $\told$
  but presented as a performance profile.
  For each method, $p$ is the proportion of problems in which the error is 
  within a factor of $\alpha$ of the smallest error over all methods.
}
\end{figure}

From the error plot in \cref{fig:accuracy_cos} 
one can see that \trigmv{} and \trigexpmv\ behave in a forward stable manner,
that is, the relative error is always within a modest multiple of the \cn\
of the problem times the tolerance,
and likewise for \dense\ except for some mild instability on four problems.
We also show in \cref{fig:performance_cos} 
a performance profile for the experiment with tolerance $\told$.
In the performance profile
the curve for a given method shows 
the proportion of problems $p$ for which the 
error is within a factor $\alpha$ of the smallest error over all 
methods.
In particular, the value at $\alpha=1$ corresponds to the proportion
of problems 
where the method performs best and for large values of $\alpha$ the performance
profile gives an idea of the reliability of the method. 
The performance profile is computed with the code from
\cite[sec.~26.4]{hihi:MG3}
and we employ the idea of \cite{dihi13} to reduce the bias of relative
errors significantly less than the precision.
The performance profile suggests that the overall behavior of 
\trigmv\ and \trigexpmv\ 
is very similar.

For the computation of $\cosh$,
shown in \cref{fig:accuracy_cosh}, 
\expmvt{} is clearly not a good choice for the computation.
This is related to the implementation of \expmvt{}.
As the algorithm first computes $b_1=\eu^{-A}b$ and 
from this computes $b_2=\eu^{2A}b_1$ the result is not always stable,
as discussed in \cref{rem:expmvt}.
We see that \trighmv{} and \trighexpmv{} behave in a forward stable manner
and have about the same accuracy for all three tolerances, 
as is clear for double precision from the performance profile 
in \cref{fig:performance_cosh}.
\end{eg}

\begin{eg}[Behavior for large matrices]\label{eg.large}
In this experiment we take a closer look at the behavior of
several algorithms for large (sparse) matrices. 
For the computation of the trigonometric functions we compare
\trigmv{} with \trigblock{} and \trigexpmv{}, which both rely on \expmv{}.
For a real matrix \trigexpmv{} calls \expmv{} with a 
pure imaginary step argument and two calls are made for a complex matrix. 
For the hyperbolic functions, we compare \trighmv{} 
with \trighblock{} and \trighexpmv{}.
This time \trighexpmv{} always calls \expmv{} twice 
and \trighblock{} calls \expmv{} with a pure imaginary step argument. 
When \expmv{} is called several times the preprocessing step (\cref{cf:ms}) 
is only performed once.


We use the same matrices as in \cite[Example 9]{ckor15},
namely \texttt{orani676} and \texttt{bcspwr10}, which are obtained
from the University of Florida Sparse Matrix Collection \cite{dahu11}.
The matrix \texttt{orani676} is a nonsymmetric $2529\times 2529$ matrix 
with $90158$ nonzero entries and
\texttt{bcspwr10} is a symmetric $5300\times 5300$ matrix with 
$13571$ nonzero entries.
The matrix \texttt{triw} is \texttt{-gallery('triw',2000,4)}, 
which is a $2000\times 2000$ upper triangular matrix
with $-1$ in the main diagonal and $-4$ in the upper triangular part.
The matrix \texttt{triu} is an upper triangular matrix of dimension $2000$
with entries uniformly distributed on $[-0.5,0.5]$.
The $9801\times 9801$ matrix \texttt{L2} 
is from a finite difference discretization (second order symmetric differences)
of the two-dimensional Laplacian in the unit square.
The $27000\times 27000$ complex matrix \texttt{S3D}
is from  a finite difference discretization (second order symmetric differences)
of the
three-dimensional Schr\"odinger equation with harmonic potential in the unit cube,
The matrix \texttt{Trans1D} is a periodic, symmetric finite difference
discretization of the transport equation in the unit square with 
dimension $1000$.

As vector $b$ we use $[1,\ldots,1]^\mathrm{T}$ for \texttt{orani676},
$[1,0,\ldots,0,1]^\mathrm{T}$ for \texttt{bcspwr10},
the discretization of $256 x^2 (1-x)^2 y^2 (1-y)^2$
for \texttt{L2},
the discretization of $4096x^2(1-x)^2y^2(1-y)^2z^2(1-z)^2$
for \texttt{S3D},
the discretization of $\exp(-100(x-0.5)^2)$ for \texttt{Trans1D},
and $v_i=\cos\,i$ for all other examples.

\begin{table}\centering\captionsetup{position=top}\footnotesize
%
%
%
%
\caption{\label{tab:large matrices} Behavior of the algorithms for large
(sparse) matrices, for tolerance $\told$.}
\subfloat[\label{tab:large matrices_trig}Results for the computation of
$\cos$ and $\sin$.]{
  \centering\setlength{\tabcolsep}{4pt}
\begin{tabular}{rr|rl|rl|rl|l}
     &   & \multicolumn{2}{c|}{\trigmv{}}           & \multicolumn{2}{c|}{\trigexpmv{}} & \multicolumn{2}{c|}{\trigblock{}} & \dense\\
          & $t$ &  $mv$ &    Time & $mv$ & Time    & $mv$ & Time & Time\\\hline
\tt orani676 & 100 &2200    &  2.3e-1 & 4164    &  3.1e-1 & 2599    &  9.5e-1    &  2.8e2 \\
\tt bcspwr10 &  10 &618     &  4.1e-2 & 1500    &  1.2e-1 & 1392    &  1.2e-1 &  2.6e2 \\
\tt     triw &  10 &56740   &  5.7e1  & 113192  &  1.1e2  & 95389   &  1.2e2  &  1.9e1 \\
\tt     triu &  40 &3936    &  4.0    & 7524    &  8.5    & 4585    &  5.2    &  1.4e1 \\
\tt     L2   & 1/4 &107528  &  1.2e1  & 215320  &  1.9e1  & 257803  &  3.0e1  &  1.3e3 \\
 \hline
\end{tabular}}

%

\subfloat[\label{tab:large matrices_trigh}Results for the computation of
$\cosh$ and $\sinh$.]{
  \centering\setlength{\tabcolsep}{4pt}
\begin{tabular}{rr|rl|rl|rl|l}
     &   & \multicolumn{2}{c|}{\trighmv{}}           & \multicolumn{2}{c|}{\trighexpmv{}} & \multicolumn{2}{c|}{\trighblock{}}& \dense\\
          & $t$ &  $mv$ &    Time & $mv$   & Time    & $mv$ & Time &Time \\\hline
\tt orani676 & 100 &  2202  &  2.2e-1 & 2202    &  2.7e-1 &  2619  &  9.5e-1    &  2.1e2\\
\tt bcspwr10 &  10 &   632  &  4.1e-2 & 806     &  5.4e-2 &   855  &  7.4e-2 &  5.4e2\\
\tt triw     &  10 & 56478  &  5.7e1  & 57582   &  1.2e2  & 94499  &  1.1e2  &  1.2e2\\
\tt triu     &  40 &  4042  &  4.1    & 4031    &  8.0    &  4689  &  5.4    &  2.9e1\\
\tt  S3D     & 1/2 & 15962  &  1.7e1  & 15934   &  2.1e1  & 32135  &  1.4e1  &  2.3e4\\
\tt  Trans1D &   2 & 13086  &  2.0e-1 & 13039   &  2.4e-1 & 17551  &  2.1e-1 &  6.2  \\
 \hline
\end{tabular}
}
%
%
\end{table}

The results for computing $\cos(tA)b$ and $\sin(tA)b$ are shown in 
\Cref{tab:large matrices_trig}, and those for 
$\cosh(tA)b$ and $\sinh(tA)b$ in \Cref{tab:large matrices_trigh}.
The different algorithms are run with tolerance $\told$.
All the methods behave in a forward stable manner,
with one exception,
so we omit the errors in the table. 
The exception is the \trighblock{} method, which has an error about 
$10^2$ times larger than the other methods for \texttt{Trans1D}.
For the different methods we list the number of 
real matrix--vector products performed ($mv$),
as well as the overall time in seconds averaged over ten runs.
The tables also show the time the dense algorithm needed to compute the
reference solution (computing both functions simultaneously). 
%


In \cref{tab:large matrices_trig} we can see
that \trigmv{} always needs the fewest matrix--vector products and 
that with the sole exception of \texttt{triw} it is always the fastest method. 
We can also see that, as expected, \trigblock{} has higher 
computational cost than \t{trigmv}.
The increase in matrix--vector products is most pronounced for 
normal matrices (\texttt{bcspwr10} and
\texttt{L2}).
For the matrix \texttt{bcspwr10} we find
$s=7$, $mv=618$, and $mvd=44$ (matrix--vector products performed in the 
preprocessing stage, in \cref{cf:ms}) for \trigmv. 
On the other hand, for \trigblock{} we find 
$s=10$, $mv=696\cdot 2=1392$, and $mvd=328\cdot 2=656$.
This means that the preprocessing stage is more expensive as the block 
matrix is nonnormal and more $\alpha_p$ values need to be computed. 
We can also see that we need more scaling steps as we miss the opportunity to 
reduce the norm. 
In total this sums up to more than twice the number of matrix--vector
products. 

The results of the experiment for the hyperbolic functions can be seen in
\cref{tab:large matrices_trigh}.
Again \trighmv{} almost always needs fewer matrix--vector products than the 
other methods 
where this time \trighexpmv{} is the closest competitor and 
\trighblock{} has a higher computational effort.
Even in the cases where \trighexpmv{} needs the same number of 
matrix--vector products or slightly fewer,
\trighmv{} is still clearly faster.
This is due to the fact that \trigmv\ employs level-3 BLAS. 


Comparing the runtime of \trigmv{} and \trighmv{} with the \dense{} algorithms
we can see that we potentially save a great deal of computation time.
The \texttt{triw} and \texttt{triu} 
matrices are the only cases where there is not a speedup of
at least a factor of 10.
For the \texttt{triw} matrix, and to a lesser extent for the 
\texttt{triu} matrix, the $\alpha_p$ values, 
which help deal with the nonnormality of the matrix, 
decay very slowly, and this hinders 
the performance of the algorithms. 
Nevertheless, in all the other cases we can see a clear speed advantage,
most significantly for \texttt{bcspwr10} where we have a speedup by a factor
6190.

\end{eg}

\begin{eg}[Schr\"odinger equation]\label{eg:schroedinger}
In this example we solve an evolution equation.
We consider the 3D Schr\"odinger equation with harmonic potential
\begin{align}\label{eq:schroedinger}
\partial_t u = \frac{\iu}{2}\left(\Delta -
\frac12 \left(x^2 + y^2 + z^2\right)\right)u.
\end{align}
We use a finite difference discretization in space with $N^3$ points on the
domain $\Omega=[0,1]^3$ and
as initial value we use the discretization of
$4096x^2(1-x)^2y^2(1-y)^2z^2(1-z)^2$.
We obtain a discretization matrix $\iu A$ of size $27000\times 27000$,
where $A$ is symmetric with all eigenvalues on the negative real axis. 
We deliberately keep $\iu$ separate and as a result
the solution of \cref{eq:schroedinger} can be interpreted as
\begin{align*}
u(t)=\eu^{\iu tA}u_0=\cos(t A)u_0 + \iu \sin(t A)u_0.
\end{align*}

\begin{table}\centering\captionsetup{position=top}\footnotesize
%
%
%
\caption{\label{tab:schroedinger}Results for the solution of the
 Schr\"odinger equation with $N=30$.
 We show the number of matrix--vector products performed,
 the relative error in the $1$-norm, and the CPU time.
 }
\begin{tabular}{l|c|c|c|c|c|l}
    & \multicolumn{3}{c|}{$\tol=\tols$} & \multicolumn{3}{c}{$\tol=\told$}\\
               & $mv$  & rel.~err & Time    & $mv$  & rel.~err & Time   \\\hline
 \trigmv{}     & 11034 & 1.3e-7 & 3.9 & 15846 & 2.7e-11 & 5.6 \\
 \trigexpmv{}  & 21952 & 1.3e-7 & 6.2 & 31516 & 2.7e-11 & 8.8 \\
 \trigblock{}  & 15883 & 5.2e-8 & 7.1 & 32023 & 1.1e-11 & 1.4e1\\
 \expleja{}    & 11180 & 8.0e-9 & 4.3 & 17348 & 1.5e-11 & 6.6 \\
 \texttt{dense}&   -   & -      & -   & -     & -       & 2.2e4
\end{tabular}


\end{table}

\Cref{tab:schroedinger} reports the results for the tolerances
$\tols$ and $\told$,
for our new algorithm \trigmv{}, \trigexpmv{}, \trigblock{}, and
\expleja{} (the method from \cite{ckor15} called in the same fashion as 
\trigexpmv{}).
The table shows the number of matrix--vector products performed,
the relative error, and the CPU time in seconds.
We see that the four methods achieve roughly the same accuracy.
We also see that \trigmv{} requires
significantly fewer matrix--vector products than \trigexpmv{} and
\trigblock{}. 
On the other hand, even though \expleja{} is a close competitor in terms of
matrix--vector products performed  the overall CPU time is higher 
than for \trigmv{}.
This is due to the fact that \trigmv{} is avoiding complex arithmetic and 
employs level-3 BLAS.
Also note that \trigmv{} needs less storage than \expleja{} as for the latter
the matrix needs to be complex.
Again we can see that the dense method needs roughly 1000 times longer for
the computation than the other algorithms.
\end{eg}

\section{Concluding remarks}\label{sec.conc}

We have developed the first \alg\ for computing the actions of the matrix
functions $\cos A$, $\sin A$, $\cosh A$, and $\sinh A$.
Our new \alg, \cref{alg:basic_alg},
can evaluate the individual actions or the actions of any of the
functions simultaneously.
The \alg\ builds on the framework of the $\eu^Ab$ \alg\ \expmv\ of
Al-Mohy and Higham \cite{alhi09a}, 
inheriting its backward stability
with respect to truncation errors,
its exclusive use of matrix--vector products
(or matrix--matrix products in our modification),
and its features for countering the effects of nonnormality.
For real $A$, $\cos(A)b$ and $\sin(A)b$ are computed entirely
in real arithmetic.
As a result of these features and its careful reuse of information,
\cref{alg:basic_alg} is more efficient than alternatives
that make multiple calls to \expmv,
as our experiments demonstrate.


Our MATLAB codes are available at \url{https://bitbucket.org/kandolfp/trigmv}

\section*{Acknowledgement}
The computational results presented have been achieved (in part)
using the HPC infrastructure LEO of the University of Innsbruck.
We thank Awad H.~Al-Mohy for his comments on an early version of 
the manuscript. 
We thank the referees for their constructive remarks which helped us to improve 
the presentation of this paper.


\newpage
\bibliographystyle{siamplain}
\bibliography{strings,trigmv,njhigham}

\def\noopsort#1{}\def\l{\char32l}\renewcommand\v[1]{\accent20 #1}
  \let\^^_=\v\def\hbk{hardback}\def\pbk{paperback}
\begin{thebibliography}{10}

\bibitem{alhi09a}
{\sc A.~H. Al-Mohy and N.~J. Higham}, {\em A new scaling and squaring algorithm
  for the matrix exponential}, SIAM J. Matrix Anal. Appl., 31 (2009),
  pp.~970--989, \url{https://doi.org/10.1137/09074721X}.

\bibitem{alhi11}
{\sc A.~H. Al-Mohy and N.~J. Higham}, {\em Computing the action of the matrix
  exponential, with an application to exponential integrators}, SIAM J. Sci.
  Comput., 33 (2011), pp.~488--511, \url{https://doi.org/10.1137/100788860}.

\bibitem{ahr15}
{\sc A.~H. Al-Mohy, N.~J. Higham, and S.~D. Relton}, {\em New algorithms for
  computing the matrix sine and cosine separately or simultaneously}, SIAM J.
  Sci. Comput., 37 (2015), pp.~A456--A487,
  \url{https://doi.org/10.1137/140973979}.

\bibitem{ckor15}
{\sc M.~Caliari, P.~Kandolf, A.~Ostermann, and S.~Rainer}, {\em The {L}eja
  method revisited: backward error analysis for the matrix exponential}, SIAM
  J. Sci. Comput., 38 (2016), pp.~A1639--A1661,
  \url{https://doi.org/10.1137/15M1027620}.

\bibitem{dahu11}
{\sc T.~A. Davis and Y.~Hu}, {\em The {University of Florida Sparse Matrix
  Collection}}, ACM Trans. Math. Software, 38 (2011), pp.~1:1--1:25,
  \url{https://doi.org/10.1145/2049662.2049663}.

\bibitem{dihi13}
{\sc N.~J. Dingle and N.~J. Higham}, {\em Reducing the influence of tiny
  normwise relative errors on performance profiles}, ACM Trans. Math. Software,
  39 (2013), pp.~24:1--24:11, \url{https://doi.org/10.1145/2491491.2491494}.

\bibitem{ehh08}
{\sc E.~Estrada, D.~J. Higham, and N.~Hatano}, {\em Communicability and
  multipartite structures in complex networks at negative absolute
  temperatures}, Physical Review E, 78 (2008), p.~026102,
  \url{https://doi.org/10.1103/PhysRevE.78.026102}.

\bibitem{aeoa09}
{\sc E.~Hansen and A.~Ostermann}, {\em High order splitting methods for
  analytic semigroups exist}, BIT, 49 (2009), pp.~527--542,
  \url{https://doi.org/10.1007/s10543-009-0236-x}.

\bibitem{hahi05c}
{\sc G.~I. Hargreaves and N.~J. Higham}, {\em Efficient algorithms for the
  matrix cosine and sine}, Numer. Algorithms, 40 (2005), pp.~383--400,
  \url{https://doi.org/10.1007/s11075-005-8141-0}.

\bibitem{hihi:MG3}
{\sc D.~J. Higham and N.~J. Higham}, {\em {MATLAB} Guide}, Society for
  Industrial and Applied Mathematics, Philadelphia, PA, USA, third~ed., 2017.

\bibitem{high-mft}
{\sc N.~J. Higham}, {\em The {Matrix Function Toolbox}}.
\newblock \url{http://www.maths.manchester.ac.uk/~higham/mftoolbox}.

\bibitem{high:FM}
{\sc N.~J. Higham}, {\em Functions of Matrices: {Theory} and Computation},
  Society for Industrial and Applied Mathematics, Philadelphia, PA, USA, 2008,
  \url{https://doi.org/10.1137/1.9780898717778}.

\bibitem{hiti00n}
{\sc N.~J. Higham and F.~Tisseur}, {\em A block algorithm for matrix $1$-norm
  estimation, with an application to $1$-norm pseudospectra}, SIAM J. Matrix
  Anal. Appl., 21 (2000), pp.~1185--1201,
  \url{https://doi.org/10.1137/S0895479899356080}.

\bibitem{ho10}
{\sc M.~Hochbruck and A.~Ostermann}, {\em {Exponential integrators}}, Acta
  Numerica, 19 (2010), pp.~209--286,
  \url{https://doi.org/10.1017/S0962492910000048}.

\bibitem{kgg12}
{\sc J.~Kunegis, G.~Gr{\"o}ner, and T.~Gottron}, {\em Online dating recommender
  systems: {The} split-complex number approach}, in Proceedings of the 4th ACM
  RecSys Workshop on Recommender Systems and the Social Web, RSWeb '12, ACM
  Press, New York, 2012, pp.~37--44,
  \url{https://doi.org/10.1145/2365934.2365942}.

\bibitem{adva-mct}
{\em {Multiprecision Computing Toolbox}}.
\newblock Advanpix, Tokyo.
\newblock \url{http://www.advanpix.com}.

\end{thebibliography}

\end{document}